\theoremstyle{thmstyleone}%
\newtheorem{theorem}{Theorem}[section]
\newtheorem{proposition}[theorem]{Proposition}%
\newtheorem{lemma}[theorem]{Lemma}%
\newtheorem{corollary}[theorem]{Corollary}
\theoremstyle{thmstyletwo}%
\newtheorem{remark}{Remark}%
\theoremstyle{thmstylethree}%
\newtheorem{definition}{Definition}%
\DeclarePairedDelimiter{\abs}{\lvert}{\rvert}
\newcommand{\domD}{\mathscr{D}}
\newcommand{\dd}{\mathrm{d}}
\newcommand{\LC}{\mathbf{L}^{\mathrm{DE}}}
\renewcommand{\Im}{\operatorname{Im}}
\renewcommand{\Re}{\operatorname{Re}}
\renewcommand{\pi}{\uppi}
\DeclareMathOperator{\sinc}{sinc}
\DeclareMathOperator{\arsinh}{arsinh}
\DeclareMathOperator{\OO}{O}
\DeclareMathOperator{\ee}{e}
\numberwithin{equation}{section}
\begin{document}

\title[Improvement of selection formulas of mesh size and truncation numbers]{Improvement of selection formulas of mesh size and truncation numbers for the DE-Sinc approximation and its theoretical error bound}


\author*[1]{\fnm{Tomoaki} \sur{Okayama}}\email{okayama@hiroshima-cu.ac.jp}
\author[2]{\fnm{Shota} \sur{Ogawa}}

\affil*[1]{\orgdiv{Graduate School of Information Sciences},
\orgname{Hiroshima City University}, \orgaddress{\street{3-4-1,
Ozuka-higashi}, \city{Asaminami-ku}, \postcode{731-3194},
\state{Hiroshima}, \country{Japan}}}

\affil[2]{\orgdiv{} \orgname{Systemers Inc.},
\orgaddress{\street{Kurotaki Building 3F, 3-9-7, Shimbashi},
\city{Minato-ku}, \postcode{105-0004}, \state{Tokyo}, \country{Japan}}}


\abstract{
The Sinc approximation applied to
double-exponentially decaying functions
is referred to as the DE-Sinc approximation.
Because of its high efficiency,
this method has been used in various applications.
In the Sinc approximation, the mesh size and truncation numbers should be
optimally selected to achieve its best performance. However,
the standard selection formula has only been ``near-optimally'' selected
because the optimal formula of the mesh size cannot be expressed
in terms of elementary functions of truncation numbers.
In this study, we propose two improved selection formulas.
The first one is based on the concept by an earlier research
that resulted in a better selection formula
for the double-exponential formula.
The formula performs slightly better than the standard one,
but is still not optimal.
As a second selection formula,
we introduce a new parameter to propose truly optimal selection formula.
We provide explicit error bounds for both selection formulas.
Numerical comparisons show that the first formula gives
a better error bound than the standard formula,
and the second formula gives a much better error bound
than the standard and first formulas.
}

\keywords{Sinc approximation, double-exponential transformation, error bound,
mesh size, truncation number}



\maketitle

\section{Introduction and summary}\label{sec1}

The Sinc approximation is based on Shannon's sampling formula
\begin{equation}
 F(x)\approx \sum_{k=-\infty}^{\infty}
 F(kh) \sinc\left(\frac{x-kh}{h}\right),
\label{eq:sinc-approx-inf}
\end{equation}
where $\sinc x=\sin(\pi x)/(\pi x)$, and $h$ represents a mesh size.
If $F$ is analytic and absolutely integrable
on a strip complex region
\begin{equation}
\domD_d=\{\zeta\in\mathbb{C} : \abs{\Im\zeta}< d\}\quad (d>0),
\label{eq:domDd}
\end{equation}
this approximation is capable of achieving exponential convergence,
denoted as $\OO(\ee^{-\pi d/h})$.
Furthermore, if $\abs{F(x)}$ decays exponentially as $x\to\pm\infty$,
i.e., $\abs{F(x)}=\OO(\ee^{-\mu\abs{x}})$ $(\mu>0)$,
the infinite sum in~\eqref{eq:sinc-approx-inf} may
be truncated at some truncation numbers $M$ and $N$ as
\begin{equation}
\sum_{k=-\infty}^{\infty}
 F(kh) \sinc\left(\frac{x-kh}{h}\right)
\approx \sum_{k=-M}^{N}
 F(kh) \sinc\left(\frac{x-kh}{h}\right).
\label{eq:sinc-approx-finite}
\end{equation}
Its error rate is estimated as $\OO(\ee^{-\mu n h} / h)$,
where $n = \max\{M, N\}$.
The overall error is estimated by
the sum of the error in~\eqref{eq:sinc-approx-inf}
(termed the discretization error) and
that in~\eqref{eq:sinc-approx-finite} (termed the truncation error).
Thus,
by setting the exponential rates of the two errors equal to each other,
we can derive the optimal formula for
the mesh size $h$ with respect to $n$
as
\[
 \ee^{-\pi d /h} = \ee^{-\mu n h}
\quad \Leftrightarrow \quad
h = \sqrt{\frac{\pi d}{\mu n}},
\]
resulting in $\OO(\sqrt{n}\ee^{-\sqrt{\pi d \mu n}})$
as the final (overall) error rate.
The Sinc approximation for
exponentially decaying functions is called
the SE-Sinc approximation,
which has been extensively developed
and analyzed by Stenger~\cite{Stenger,stenger00:_summar}.

Recently, it has been reported~\cite{SugiMatsu,TanaSugiMuro} that
if $\abs{F(x)}$ decays double exponentially as $x\to\pm\infty$,
i.e., $\abs{F(x)}=\OO(\ee^{-(\pi/2)\mu \exp(\abs{x})})$,
the approximation error in~\eqref{eq:sinc-approx-finite}
is $\OO(\ee^{-(\pi/2)\mu\exp(nh)})$,
which is considerably smaller than that
in the case where $\abs{F(x)}$ decays single exponentially.
The Sinc approximation for
double-exponentially decaying functions is called
the DE-Sinc approximation.
In this case, the optimal formula of
the mesh size $h$ with respect to $n$
is given by the equation
\begin{equation}
 \ee^{-\pi d /h} = \ee^{-(\pi/2)\mu\exp(nh)},
\label{eq:double-expo-case}
\end{equation}
but a closed form for $h$
in terms of elementary functions of $n$
cannot be obtained.
Instead, several authors have employed the following near-optimal formula
\begin{equation}
 h = \frac{\log(2 d n/\mu)}{n}.
\label{eq:standard-h-DE}
\end{equation}
By the formula, the left-hand side in~\eqref{eq:double-expo-case}
can be written as $\ee^{-\pi d n/\log(2 d n/\mu)}$,
and the right-hand side in~\eqref{eq:double-expo-case}
can be written as $\ee^{-\pi d n}$.
Although these two rates are approximately equal,
the former is slightly lower.
Consequently, the final error rate of the DE-Sinc approximation
is $\OO(\ee^{-\pi d n/\log(2 d n/\mu)})$.
We note that not only its error rate
but also its computable error bound has been provided~\cite{OkaMatsuSugi,Oka1},
which is useful for a computation with guaranteed accuracy.

Considering the above background, this study improves the formula
that relates $h$ and $n$ in the case of the DE-Sinc approximation.
We propose two improved formulas.
The first one is based on the concept by an earlier research~\cite{OkaKuro},
which focused on the double-exponential formula.
Their improved formula between $h$ and $n$
reduced the gap between the discretization and truncation errors.
As a result,
a sharper error bound for the double-exponential formula
was obtained than the existing one.
Thus, utilizing the same idea,
we provide a sharper error bound for the DE-Sinc approximation
than the existing one~\cite{OkaMatsuSugi,Oka1}.

Our second formula is based on a distinct approach.
Thus far, $n$ has denoted the maximum number of truncation numbers $M$ and $N$,
i.e., $n=\max\{M, N\}$.
This condition has been a barrier to the construction
of the optimal selection formula that
eliminates the gap between the discretization and truncation errors.
In the new formula, we regard $n$ as a free parameter
that does not have to be equal to $M$ or $N$.
Then, we can derive formulas for $h$, $M$, and $N$
with respect to $n$, such that
the discretization and truncation errors
have exactly the same rate.
In consequence, we give a much better error bound
for the DE-Sinc approximation.
Numerical experiments also support our theoretical results.

The remaining sections of this paper are structured as follows.
In Section~\ref{sec2},
after reviewing an existing theorem,
we state two new theorems, which are the primary results of this study.
In Section~\ref{sec3},
we show some numerical examples.
In Section~\ref{sec4},
we provide proofs of the main results.

\section{Existing and new results}\label{sec2}

In this paper,
we consider functions $F(\zeta)$
that are analytic in $\domD_d$ (recall~\eqref{eq:domDd})
and decay double-exponentially as $\Re\zeta\to\pm\infty$.
More specifically,
we consider the following function space.

\begin{definition}
Let $L$, $R$, $\alpha$, and $\beta$
be positive constants,
and let $d$ be a constant with $0<d<\pi/2$.
Then, $\LC_{L,R,\alpha,\beta}(\domD_d)$ denotes
a family of functions $F$ that are analytic in $\domD_d$,
and satisfy
\begin{align}
 \abs{F(\zeta)}&\leq
\frac{L}{\abs{1+\ee^{-\pi\sinh\zeta}}^{\alpha}\abs{1+\ee^{\pi\sinh\zeta}}^{\beta}}\nonumber
\intertext{for all $\zeta\in\domD_d$, and}
 \abs{F(x)}&\leq
\frac{R}{(1+\ee^{-\pi\sinh x})^{\alpha}(1+\ee^{\pi\sinh x})^{\beta}}
\label{eq:double-exp-decay}
\end{align}
for all $x\in\mathbb{R}$.
\end{definition}

The existing error bound
for the DE-Sinc approximation is written as follows.

\begin{theorem}[Okayama~{\cite[Theorem 4.8]{Oka1}}]
\label{thm:exist}
Assume that $F\in\LC_{L,R,\alpha,\beta}(\domD_d)$.
Let $\mu=\min\{\alpha,\beta\}$,
let $\nu=\max\{\alpha,\beta\}$,
let $h$ be defined as in~\eqref{eq:standard-h-DE},
and let $M$ and $N$ be defined as
\begin{align}
\left\{
\begin{aligned}
  M&=n, &N&= n - \left\lfloor\frac{1}{h}\log\left(\frac{\beta}{\alpha}\right)\right\rfloor&&
(\text{if}\,\,\,\mu=\alpha),\\
  N&=n, &M&= n - \left\lfloor\frac{1}{h}\log\left(\frac{\alpha}{\beta}\right)\right\rfloor&&
(\text{if}\,\,\,\mu=\beta).
\end{aligned}
\right.
\label{eq:standard-M-N}
\end{align}
Furthermore, let $n$ be taken sufficiently large so that
$n\geq (\nu\ee)/(2d)$ holds.
Then, it holds that
\[
 \sup_{x\in\mathbb{R}}
\left\lvert
F(x) - \sum_{k=-M}^N F(kh)\sinc\left(\frac{x - kh}{h}\right)
\right\rvert
\leq C \ee^{-\pi d n/\log(2 d n/\mu)},
\]
where $C$ is a constant independent of $n$, expressed as
\[
 C
=\frac{2}{\pi d}
\left[
\frac{2L}{\pi\mu(1 - \ee^{-\pi\mu\ee})\cos^{\alpha+\beta}((\pi/2)\sin d)\cos d}
+ R\ee^{\pi\nu/2}
\right].
\]
\end{theorem}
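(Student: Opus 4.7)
My plan is to apply the triangle inequality to split the error as
\[
 \sup_{x\in\mathbb{R}}\biggl|F(x) - \sum_{k=-M}^{N} F(kh)\sinc\!\left(\frac{x-kh}{h}\right)\biggr|
 \leq E_{\text{disc}} + E_{\text{trunc}},
\]
where $E_{\text{disc}}$ is the discretization error (between $F$ and its full doubly-infinite Sinc series at mesh $h$) and $E_{\text{trunc}}$ collects the contribution of the discarded indices $k<-M$ and $k>N$. These two pieces will produce, respectively, the two summands in the stated constant $C$.

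For $E_{\text{disc}}$ I would invoke the classical Sinc-interpolation bound: for $F$ analytic and absolutely integrable on $\domD_d$,
\[
 E_{\text{disc}}\leq
 \frac{2\,\ee^{-\pi d/h}}{\pi d\,(1-\ee^{-2\pi d/h})\cos d}\,
 \Bigl(\textstyle\int_{-\infty}^{\infty}\abs{F(x+\mathrm{i}d)}\,\dd x + \int_{-\infty}^{\infty}\abs{F(x-\mathrm{i}d)}\,\dd x\Bigr).
\]
To bound the boundary integrals, I use the identity $\sinh(x\pm\mathrm{i}d) = \sinh(x)\cos d \pm \mathrm{i}\cosh(x)\sin d$ together with a uniform lower bound of the type $\abs{1+\ee^{\pm\pi\sinh\zeta}} \geq \cos((\pi/2)\sin d)\,(1+\ee^{\pm\pi\sinh(x)\cos d})$ on $\partial\domD_d$, which produces the factor $\cos^{\alpha+\beta}((\pi/2)\sin d)$ in the denominator of $C$. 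Substituting $t=\sinh(x)\cos d$ and using $(1+\ee^{-\pi t})^{-\alpha}(1+\ee^{\pi t})^{-\beta}\leq \ee^{-\pi\mu\abs{t}}$ reduces each boundary integral to a constant multiple of $L/(\pi\mu\cos d)$. The hypothesis $n\geq \nu\ee/(2d)$ is then used to furnish the bound $1-\ee^{-2\pi d/h}\geq 1-\ee^{-\pi\mu\ee}$, yielding precisely the first summand of $C$.

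For $E_{\text{trunc}}$ I split the discarded part into two one-sided tails, use $\abs{\sinc}\leq 1$, and apply~\eqref{eq:double-exp-decay} (with a sign change on the left tail) to obtain
\[
 E_{\text{trunc}}
 \leq R\sum_{k=N+1}^{\infty}\bigl(1+\ee^{\pi\sinh(kh)}\bigr)^{-\beta}
   + R\sum_{k=M+1}^{\infty}\bigl(1+\ee^{\pi\sinh(kh)}\bigr)^{-\alpha}.
\]
The inequality $\sinh x\geq (\ee^{x}-1)/2$ for $x\geq 0$ contributes a factor bounded by $\ee^{\pi\nu/2}$ (the worst case corresponding to $\nu=\max\{\alpha,\beta\}$), so each tail reduces to $\ee^{\pi\nu/2}$ times a geometric-type sum dominated by its leading term $\ee^{-(\pi\mu/2)\ee^{nh}}$. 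The truncation numbers~\eqref{eq:standard-M-N} are chosen precisely so that $\alpha\ee^{Mh}=\beta\ee^{Nh}=\mu\ee^{nh}$ (modulo the floor function), so the leading terms of both tails coincide and, after inserting~\eqref{eq:standard-h-DE}, both equal $\ee^{-\pi d n}$. Since $\pi d n \geq \pi d n/\log(2d n/\mu)$ whenever $\log(2d n/\mu)\geq 1$—and this holds under $n\geq \nu\ee/(2d)$—the truncation contribution is absorbed into the same rate $\ee^{-\pi d n/\log(2d n/\mu)}$, leaving $R\ee^{\pi\nu/2}$ as the second summand of $C$.

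The main obstacle, I expect, is the careful bookkeeping of sharp numerical constants rather than any conceptual difficulty: one must pin down the precise origin of the factor $(1-\ee^{-\pi\mu\ee})^{-1}$ by combining $n\geq \nu\ee/(2d)$ with the explicit $h$ from~\eqref{eq:standard-h-DE}, establish the uniform lower bound of $\abs{1+\ee^{\pm\pi\sinh\zeta}}$ on $\partial\domD_d$ in the correct form, and account for the floor function in~\eqref{eq:standard-M-N}, which introduces a harmless multiplicative factor bounded by $\ee^{\pi\nu/2}$. Adding the two prefactors then yields exactly the stated $C$.
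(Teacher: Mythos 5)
Note first that the paper itself does not prove Theorem~\ref{thm:exist}: it is quoted from the earlier work [Oka1], and only its discretization half is restated here as Theorem~\ref{thm:discrete}. Your decomposition into a discretization and a truncation error is exactly the strategy of that original proof (and of Section~\ref{sec4} for the new theorems), and much of your bookkeeping is on target: $1-\ee^{-2\pi d/h}\ge 1-\ee^{-\pi\mu\ee}$ does follow from $n\ge\nu\ee/(2d)$ (via $y/\log y\ge\ee$ for $y=2dn/\mu\ge\ee$); the choice~\eqref{eq:standard-M-N} does give $\alpha\ee^{Mh}\ge\mu\ee^{nh}$ and $\beta\ee^{Nh}\ge\mu\ee^{nh}$, the floor acting in the favorable direction and hence costing nothing (it is not the source of the $\ee^{\pi\nu/2}$; that factor comes solely from $\sinh x\ge(\ee^{x}-1)/2$, as you first said); and $\ee^{nh}=2dn/\mu$ turns the exponent into $\pi d n$.

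Two constant-level gaps remain. First, your ``classical Sinc-interpolation bound'' is misquoted: the standard constant is $\ee^{-\pi d/h}/\bigl(2\pi d(1-\ee^{-2\pi d/h})\bigr)$ times the two boundary integrals, with no $\cos d$; the $\cos d$ enters only when evaluating $\int_{-\infty}^{\infty}\abs{F(x\pm\mathrm{i}d)}\,\dd x\le 2L/\bigl(\pi\mu\cos d\,\cos^{\alpha+\beta}((\pi/2)\sin d)\bigr)$ via $u=\sinh x$. As written you double-count $\cos d$ and carry an extra factor, and literally combining your displayed constants overshoots the first summand of the stated $C$ by $2/\cos d$; with the correct classical constant the computation does land within $C$, so this is fixable but must be fixed. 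Second, and more substantively, ``a geometric-type sum dominated by its leading term'' does not produce the prefactor $\tfrac{2}{\pi d}$ attached to $R\ee^{\pi\nu/2}$: bounding each tail by its leading term times an $\OO(1)$ geometric factor gives roughly $2R\ee^{\pi\nu/2}\ee^{-\pi dn}$, and at the admissible borderline $n=\nu\ee/(2d)$ (where $\log(2dn/\mu)$ can equal $1$, so $\ee^{-\pi dn}=\ee^{-\pi dn/\log(2dn/\mu)}$ exactly) this exceeds $\tfrac{2}{\pi d}R\ee^{\pi\nu/2}\ee^{-\pi dn/\log(2dn/\mu)}$ whenever $\pi d>1$. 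To reach the stated constant you need the sum-to-integral comparison $\sum_{k>N}\ee^{-(\pi\beta/2)\ee^{kh}}\le\tfrac1h\int_{Nh}^{\infty}\ee^{-(\pi\beta/2)\ee^{x}}\dd x\le\ee^{-(\pi\beta/2)\ee^{Nh}}/\bigl(h(\pi\beta/2)\ee^{Nh}\bigr)$, whose prefactor is at most $1/(\pi d n h)=1/(\pi d\log(2dn/\mu))\le 1/(\pi d)$ --- precisely the device used in the paper's Lemma~\ref{lem:new} for the new selection formulas.
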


The above theorem states not only the convergence rate
$\OO(\ee^{-\pi d n/\log(2 d n/\mu)})$,
but also the computable error bound by revealing
the explicit form of the constant $C$.

In the same form,
the first main result of this paper is written as follows.
The proof is given in Section~\ref{sec:proof1}.
For convenience,
we introduce functions $p(x)$ and $q(x)$ as
\begin{align}
 p(x) &= \frac{x}{\arsinh(x / \arsinh x)}, \label{eq:p-x}\\
 q(x) &= \frac{x}{\arsinh x}. \label{eq:q-x}
\end{align}

\begin{theorem}
\label{thm:new1}
Assume that $F\in\LC_{L,R,\alpha,\beta}(\domD_d)$.
Let $\mu=\min\{\alpha,\beta\}$,
let $h$ be defined as
\begin{equation}
h = \frac{\arsinh(q(dn/\mu))}{n},
\label{eq:improve-h-DE-1}
\end{equation}
and let $M$ and $N$ be defined as
\begin{align}
\left\{
\begin{aligned}
  M&=n, &N&= \left\lceil\frac{1}{h}\arsinh\left(\frac{\alpha}{\beta}q\left(\frac{dn}{\mu}\right)\right)\right\rceil&&
(\text{if}\,\,\,\mu=\alpha),\\
  N&=n, &M&= \left\lceil\frac{1}{h}\arsinh\left(\frac{\beta}{\alpha}q\left(\frac{dn}{\mu}\right)\right)\right\rceil&&
(\text{if}\,\,\,\mu=\beta).
\end{aligned}
\right.
\label{eq:improve-M-N-1}
\end{align}
Then, it holds that
\[
 \sup_{x\in\mathbb{R}}
\left\lvert
F(x) - \sum_{k=-M}^N F(kh)\sinc\left(\frac{x - kh}{h}\right)
\right\rvert
\leq C \ee^{-\pi d n/\arsinh(d n/\mu)},
\]
where $C$ is a constant independent of $n$, expressed as
\[
 C = \frac{2}{\pi d}
\left[
\frac{2L\ee^{-\pi\mu(p(d/\mu)-q(d/\mu))}}{\pi\mu(1 - \ee^{-2\pi\mu p(d/\mu)})\cos^{\alpha+\beta}((\pi/2)\sin d)\cos d}
+ \frac{\pi}{2}R
\right].
\]
\end{theorem}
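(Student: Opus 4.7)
The plan is to split the approximation error via the triangle inequality into the discretization error
\[
E_{\mathrm{D}} = \sup_{x\in\mathbb{R}} \left| F(x) - \sum_{k=-\infty}^{\infty} F(kh)\sinc\!\left(\frac{x-kh}{h}\right) \right|
\]
and the truncation error $E_{\mathrm{T}}$ (from cutting the infinite sum down to $k\in\{-M,\ldots,N\}$), estimate each via the standard tools already available for DE-Sinc on $\LC_{L,R,\alpha,\beta}(\domD_d)$, and then substitute the new choices~\eqref{eq:improve-h-DE-1}--\eqref{eq:improve-M-N-1} to extract an $n$-independent constant.

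For $E_{\mathrm{D}}$ I invoke the standard contour-integral estimate for DE-Sinc, which gives a bound of the form
\[
E_{\mathrm{D}} \leq \frac{2L}{\pi\mu\cos^{\alpha+\beta}((\pi/2)\sin d)\cos d}\cdot \frac{\ee^{-\pi d/h}}{1-\ee^{-2\pi d/h}}.
\]
The definition $h=\arsinh(q(dn/\mu))/n$ yields the identities
\[
\pi d/h=\pi\mu\,p(dn/\mu)\quad\text{and}\quad \pi\mu\sinh(nh)=\pi\mu\,q(dn/\mu)=\pi dn/\arsinh(dn/\mu),
\]
so I factor $\ee^{-\pi d/h}=\ee^{-\pi\mu q(dn/\mu)}\cdot\ee^{-\pi\mu(p-q)(dn/\mu)}$; the first factor is precisely the target rate. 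To make the remaining factors $n$-independent I will prove, as auxiliary lemmas, that both $p$ and $p-q$ are non-decreasing on $(0,\infty)$. Applied at $x=dn/\mu\geq d/\mu$ these let me replace $p(dn/\mu)$ by $p(d/\mu)$ in $(1-\ee^{-2\pi\mu p(\cdot)})^{-1}$ and $(p-q)(dn/\mu)$ by $(p-q)(d/\mu)$ in $\ee^{-\pi\mu(p-q)(\cdot)}$, producing exactly the first summand inside the bracket of $C$.

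For $E_{\mathrm{T}}$ I use the pointwise decay~\eqref{eq:double-exp-decay} together with $|\sinc|\leq 1$ and the elementary inequality $(1+\ee^{\pi\sinh(kh)})^{\beta}\geq\ee^{\pi\beta\sinh(kh)}$ for $k\geq 0$, and symmetrically for $k<0$. The choice~\eqref{eq:improve-M-N-1} is engineered so that both $\pi\alpha\sinh(Mh)$ and $\pi\beta\sinh(Nh)$ are at least $\pi\mu q(dn/\mu)$, matching the target rate. I dominate each tail by a geometric series via the elementary inequality $\sinh(y)-\sinh(y_0)\geq y-y_0$ for $y\geq y_0\geq 0$; after bounding the $h$-dependent geometric factor uniformly in $n$, the truncation contribution becomes the $(\pi R/2)$ summand inside the bracket of $C$.

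The main obstacle is the two monotonicity statements for the auxiliary functions $p$ and $p-q$: built from nested $\arsinh$ compositions they are not obviously monotone, and the verification will rely on $\arsinh'(x)=1/\sqrt{1+x^2}$ and sign arguments for derivatives of quotients of the form $x/\arsinh(\phi(x))$. A secondary, essentially routine, bookkeeping issue is to check that the ceiling in~\eqref{eq:improve-M-N-1} only enlarges $\sinh(Nh)$ (resp.\ $\sinh(Mh)$) and hence preserves the truncation rate.
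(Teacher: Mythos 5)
Your decomposition and your treatment of the discretization term follow the paper's route: the identity $\pi d/h=\pi\mu\,p(dn/\mu)$, the factorization $\ee^{-\pi\mu p}=\ee^{-\pi\mu(p-q)}\ee^{-\pi\mu q}$, and the monotonicity of $p$ and $p-q$ (which the paper imports from Okayama--Kurogi rather than reproving) are exactly what is used there. The genuine gap is in your truncation estimate. Bounding the tails by a geometric series via the crude inequality $\sinh y-\sinh y_0\geq y-y_0$ produces a factor $1/(\ee^{\pi\mu h}-1)\approx 1/(\pi\mu h)$, and this is \emph{not} bounded uniformly in $n$: with $h=\arsinh(q(dn/\mu))/n$ we have $h\to 0$, so the factor grows like $n/\log n$. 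There is also no exponential slack to absorb it, because the truncation exponential $\ee^{-\pi\mu q(dn/\mu)}$ is \emph{identical} to the target rate $\ee^{-\pi d n/\arsinh(dn/\mu)}$; hence no finite constant can emerge from the sketch as written. What is needed is a tail bound that retains a factor decaying like $1/\cosh(Nh)\sim 1/q(dn/\mu)$: the paper obtains it by comparing the tail sum with $\frac{1}{h}\int_{Nh}^{\infty}\ee^{-\pi\beta\sinh x}\,\dd x\leq \ee^{-\pi\beta\sinh(Nh)}/(\pi\beta h\cosh(Nh))$; in your geometric-series language you would need the tangent-line bound $\sinh(kh)\geq\sinh(Nh)+(k-N)h\cosh(Nh)$ rather than the slope-one bound. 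Even with that repair, the combination $1/(h\cosh(Nh))$ is only bounded after proving $\arsinh(x)/\arsinh(x/\arsinh x)\leq\pi/2$, equivalently $\sinh t\geq t\sinh((2/\pi)t)$ for $t\geq 0$; this inequality is precisely where the factor $\pi/2$ in the $\tfrac{\pi}{2}R$ summand comes from, and the paper devotes a separate proposition with a three-case analysis to it. Your proposal contains neither the compensating $\cosh(Nh)$ factor nor this inequality, and ``bounding the $h$-dependent geometric factor uniformly in $n$'' is exactly the step that fails.

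A secondary, bookkeeping point: the discretization bound you invoke omits the prefactor $2/(\pi d)$ of the standard estimate, which reads $\frac{4L}{\pi^2 d\mu(1-\ee^{-2\pi d/h})\cos^{\alpha+\beta}((\pi/2)\sin d)\cos d}\,\ee^{-\pi d/h}$; as written your bound does not reproduce the first summand inside the bracket of $C$ once the common prefactor $2/(\pi d)$ is factored out. Since the whole point of the theorem is the explicit constant, this should be corrected along with the truncation argument.
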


The convergence rate given by Theorem~\ref{thm:new1} is
$\OO(\ee^{-\pi d n/\arsinh(dn/\mu)})$,
which is asymptotically equal to the previous rate:
$\OO(\ee^{-\pi d n/\log(2dn/\mu)})$
(note that
$\arsinh x=\log(x + \sqrt{1 + x^2})\simeq \log (2x)$ as $x\to\infty$).
However, the constant $C$ in Theorem~\ref{thm:new1} may be smaller than
that in Theorem~\ref{thm:exist},
which is also confirmed in numerical experiments provided in the next section.
Furthermore, in Theorem~\ref{thm:exist},
there is an artificial condition on $n$:
$n\geq (\nu\ee)/(2d)$,
which is eliminated in Theorem~\ref{thm:new1}.

In both Theorems~\ref{thm:exist} and~\ref{thm:new1},
the integer $n$ satisfies
$n = \max\{M, N\}$,
i.e., either $M$ or $N$ equals to $n$.
In the next theorem,
both $M$ and $N$ may be taken smaller than $n$.
This theorem is the second main result of this paper.
The proof is given in Section~\ref{sec:proof2}.

\begin{theorem}
\label{thm:new2}
Assume that $F\in\LC_{L,R,\alpha,\beta}(\domD_d)$.
Let $\mu=\min\{\alpha,\beta\}$,
let $h$ be defined as
\begin{equation}
h = \frac{\arsinh(dn/\mu)}{n},
\label{eq:improve-h-DE-2}
\end{equation}
and let $M$ and $N$ be defined as
\begin{align}
\begin{aligned}
  M&=\left\lceil\frac{1}{h}\arsinh\left(\frac{\mu}{\alpha}q\left(\frac{dn}{\mu}\right)\right)\right\rceil,
 &N&= \left\lceil\frac{1}{h}\arsinh\left(\frac{\mu}{\beta}q\left(\frac{dn}{\mu}\right)\right)\right\rceil.
\end{aligned}
\label{eq:improve-M-N-2}
\end{align}
Then, it holds that
\[
 \sup_{x\in\mathbb{R}}
\left\lvert
F(x) - \sum_{k=-M}^N F(kh)\sinc\left(\frac{x - kh}{h}\right)
\right\rvert
\leq C \ee^{-\pi d n/\arsinh(d n/\mu)},
\]
where $C$ is a constant independent of $n$, expressed as
\begin{equation}
C= \frac{2}{\pi d}
\left[
\frac{2L}{\pi\mu(1 - \ee^{-2\pi\mu q(d/\mu)})\cos^{\alpha+\beta}((\pi/2)\sin d)\cos d}
+ R
\right].
\label{eq:constant-C-thm:new2}
\end{equation}
\end{theorem}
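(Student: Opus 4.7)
The plan is to decompose the error by the triangle inequality into a discretization error (from approximating $F$ by the full Sinc series on $\mathbb{Z}$) and a truncation error (from discarding terms with $k<-M$ or $k>N$), and then to show that the new selections of $h$, $M$, and $N$ in~\eqref{eq:improve-h-DE-2} and~\eqref{eq:improve-M-N-2} are designed so that both pieces achieve \emph{exactly the same} exponential rate $\ee^{-\pi d n/\arsinh(dn/\mu)}$. Eliminating this gap (which persists in Theorem~\ref{thm:exist} because $n=\max\{M,N\}$) is what produces the sharper constant~\eqref{eq:constant-C-thm:new2}.

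For the discretization error I would reuse the standard contour-integral estimate for $F\in\LC_{L,R,\alpha,\beta}(\domD_d)$ underlying Theorem~\ref{thm:exist}, in the refined form
\[
\sup_{x\in\mathbb{R}}\biggl|F(x)-\sum_{k=-\infty}^{\infty}F(kh)\sinc\!\Bigl(\tfrac{x-kh}{h}\Bigr)\biggr|\leq \frac{2L\,\ee^{-\pi d/h}}{\pi\mu\bigl(1-\ee^{-2\pi\mu q(dn/\mu)}\bigr)\cos^{\alpha+\beta}((\pi/2)\sin d)\cos d}.
\]
Substituting $h=\arsinh(dn/\mu)/n$ turns $\ee^{-\pi d/h}$ into $\ee^{-\pi d n/\arsinh(dn/\mu)}$, matching the claimed rate; and, because $q$ is monotone increasing on $(0,\infty)$, replacing $q(dn/\mu)$ by its smallest value $q(d/\mu)$ (attained at $n=1$) yields the denominator that actually appears in~\eqref{eq:constant-C-thm:new2}.

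For the truncation error I would treat the two tails separately via the pointwise decay~\eqref{eq:double-exp-decay}: $|F(kh)|\le R\ee^{-\beta\pi\sinh(kh)}$ for $k\ge N+1$ and $|F(kh)|\le R\ee^{-\alpha\pi\sinh(|k|h)}$ for $k\le -M-1$, and apply the standard $|\sinc|\le 1$ plus monotone/integral-comparison argument used in Okayama's prior DE-Sinc analyses to dominate each tail sum by a constant multiple of $\ee^{-\beta\pi\sinh(Nh)}$ and $\ee^{-\alpha\pi\sinh(Mh)}$ respectively. The decisive computation is that~\eqref{eq:improve-M-N-2} forces these two exponents to \emph{coincide} with the discretization rate: from $Nh\ge\arsinh\!\bigl((\mu/\beta)q(dn/\mu)\bigr)$,
\[
\beta\pi\sinh(Nh)\ge \beta\pi\cdot\frac{\mu}{\beta}\cdot q(dn/\mu)=\pi\mu\cdot\frac{dn/\mu}{\arsinh(dn/\mu)}=\frac{\pi d n}{\arsinh(dn/\mu)},
\]
and an identical calculation gives $\alpha\pi\sinh(Mh)\ge \pi dn/\arsinh(dn/\mu)$. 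Summing the three exponential estimates and absorbing numerical constants into the prefactor $2/(\pi d)$ produces the bound $C\,\ee^{-\pi d n/\arsinh(dn/\mu)}$ with $C$ as in~\eqref{eq:constant-C-thm:new2}.

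The main obstacle I anticipate is producing the discretization-error bound in precisely the stated form, with denominator $1-\ee^{-2\pi\mu q(dn/\mu)}$ and without any artificial lower bound on $n$ such as the $n\ge (\nu\ee)/(2d)$ appearing in Theorem~\ref{thm:exist}; this requires a sharpening of the residue/contour-integral argument analogous to the one that produces Theorem~\ref{thm:new1}. Once this refinement is in hand, the truncation estimate and balancing computation above are essentially routine, precisely because decoupling $n$ from $\max\{M,N\}$ allows $M$ and $N$ to be chosen independently so as to annihilate the discretization/truncation gap.
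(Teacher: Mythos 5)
Your skeleton is the same as the paper's: split the error into the discretization part and the truncation part, observe that the choices \eqref{eq:improve-h-DE-2} and \eqref{eq:improve-M-N-2} make $\pi d/h$, $\pi\alpha\sinh(Mh)$, and $\pi\beta\sinh(Nh)$ all at least $\pi d n/\arsinh(dn/\mu)=\pi\mu q(dn/\mu)$, and use the monotonicity of $q$ to replace $q(dn/\mu)$ by $q(d/\mu)$ in the denominator. Your exponent computations are exactly the paper's design. The gap is in the constant bookkeeping, which is the actual content of the theorem since the rate already appears in Theorem~\ref{thm:new1}. First, the discretization estimate you posit is mis-calibrated: the correct and already-available tool is Theorem~\ref{thm:discrete} (quoted from Okayama's earlier work), which bounds the discretization error by $\frac{4L}{\pi^{2}d\mu(1-\ee^{-2\pi d/h})\cos^{\alpha+\beta}((\pi/2)\sin d)\cos d}\,\ee^{-\pi d/h}$ for every $h>0$ and with no condition on $n$ (the condition $n\ge\nu\ee/(2d)$ in Theorem~\ref{thm:exist} comes from the old selection of $h,M,N$, not from the contour argument), so the ``sharpening'' you flag as the main obstacle is not needed. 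But the ``refined form'' you write instead, $\frac{2L}{\pi\mu(1-\ee^{-2\pi\mu q(dn/\mu)})\cos^{\alpha+\beta}((\pi/2)\sin d)\cos d}\,\ee^{-\pi d/h}$, differs from this by the factor $\pi d/2$: for $d<2/\pi$ it is a strictly stronger, unproved claim, and for $d>2/\pi$ it is weaker than the allocation $\frac{2}{\pi d}\cdot\frac{2L}{\pi\mu(1-\ee^{-2\pi\mu q(d/\mu)})\cos^{\alpha+\beta}((\pi/2)\sin d)\cos d}$ inside \eqref{eq:constant-C-thm:new2}, so ``absorbing numerical constants into $2/(\pi d)$'' does not recover the stated $C$.

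Second, the truncation prefactor is not a numerical constant that can be absorbed. The integral-comparison argument gives, for the right tail, the bound $\frac{R}{\pi\beta h\cosh(Nh)}\,\ee^{-\pi\beta\sinh(Nh)}$, whose prefactor depends on $n$ through $h$ and $Nh$. To land on the ``$+R$'' term in \eqref{eq:constant-C-thm:new2}, i.e.\ a total truncation contribution of $\frac{2R}{\pi d}\,\ee^{-\pi\mu q(dn/\mu)}$, you must carry out the cancellation the paper performs in Lemma~\ref{lem:new} and in the final assembly: use $Nh\ge\arsinh\bigl((\mu/\beta)q(dn/\mu)\bigr)$ to get $\cosh(Nh)\ge\sqrt{1+\{(\mu/\beta)q(dn/\mu)\}^{2}}$, use $\beta\ge\mu$ to reduce this to $\mu\sqrt{1+\{q(dn/\mu)\}^{2}}$ in the denominator, and then combine with $1/h=(\mu/d)\,q(dn/\mu)$ and $q(dn/\mu)/\sqrt{1+\{q(dn/\mu)\}^{2}}\le 1$. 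Without exhibiting this step (and the analogous one for the left tail), the claim that the tails are ``a constant multiple'' of $\ee^{-\beta\pi\sinh(Nh)}$ and $\ee^{-\alpha\pi\sinh(Mh)}$ with constants that sum into \eqref{eq:constant-C-thm:new2} is unsupported; the balancing idea is right, but the explicit bound that the theorem asserts is exactly this omitted computation.
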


For simplicity, let us consider the case where $\alpha=\beta$.
In this case, from both formulas~\eqref{eq:standard-M-N}
and~\eqref{eq:improve-M-N-1},
substituting the formula of $h$,
we obtain $M=N=n$.
In contrast, from the formula~\eqref{eq:improve-M-N-2},
substituting the formula of $h$,
we obtain
\[
 M=N=\left\lceil
n\cdot \frac{\arsinh(q(dn/\mu))}{\arsinh(dn/\mu)}
\right\rceil.
\]
Because $q(x)< x$ for $x> \sinh 1$,
this equation implies that $M=N < n$ for sufficiently large $n$.
Although the convergence rates
given by Theorems~\ref{thm:new1} and~\ref{thm:new2}
seem exactly the same,
we may obtain a better result in the case of Theorem~\ref{thm:new2},
because the number of function evaluations
$M+N+1$ may be smaller than that in the case of Theorem~\ref{thm:new1}.
This is also confirmed in numerical experiments provided in the next section.

\section{Numerical experiments}
\label{sec3}

In this section, we present numerical results
using the formulas described
in Theorems~\ref{thm:exist}--\ref{thm:new2}.
We implemented the programs in C
with double-precision arithmetic.

The numerical experiments are conducted on the following two functions:
\begin{align}
 f_1(t) &= (1 - t^2)^{1/2}, \label{eq:f_1}\\
 f_2(t) &= (1 + t^2)^{1/2}(1 + t)^{1/2}(1 - t)^{3/4}, \label{eq:f_2}
\end{align}
which are defined on $[-1, 1]$.
The second one was also conducted in
Okayama et al.~\cite[Example 3.1]{OkaMatsuSugi}.
Before applying the Sinc approximation,
we employ the double-exponential transformation given by
\[
 t = \phi(x) = \tanh\left(\frac{\pi}{2}\sinh x\right).
\]
That is, putting $F_i(x)=f_i(\phi(x))$, we obtain the
DE-Sinc approximation formula
\[
 F_i(x) \approx\sum_{k=-M}^N F_i(kh)\sinc\left(\frac{x - kh}{h}\right).
\]
The function $F_1$ satisfies
the assumptions in Theorems~\ref{thm:exist}--\ref{thm:new2}
with $L=R=2$, $\alpha=\beta=1/2$, and $d=3/2$.
The function $F_2$
satisfies
the assumptions in Theorems~\ref{thm:exist}--\ref{thm:new2}
with $L=R=4$, $\alpha=1/2$, $\beta=3/4$, and $d=\pi/6$.
We examined its approximation error
on the following 20001 points:
\[
 x = \frac{4i}{10000}\quad
 (i = -10000,\,-9999,\,\ldots,\,-1,\,0,\,1,\,\ldots,\,9999,\,10000),
\]
and investigated the maximum error among these points.
The results are shown in Figs.~\ref{fig:1} and~\ref{fig:2}.
In both figures,
the error bounds given in Theorems~\ref{thm:exist}--\ref{thm:new2}
include corresponding observed errors.
Furthermore, the figures also demonstrate that
the constant $C$ in Theorem~\ref{thm:new1}
is improved compared to that in Theorem~\ref{thm:exist},
although the convergence rate remains unchanged.
In contrast,
the convergence rate by Theorem~\ref{thm:new2}
is higher than that by Theorems~\ref{thm:exist} and~\ref{thm:new1}.
This reflects the fact that $M+N+1$ (the number of function evaluations)
in Theorem~\ref{thm:new2} is smaller than
that in Theorems~\ref{thm:exist} and~\ref{thm:new1}
(note that the horizontal axis is not $n$, but $M+N+1$).

\begin{figure}[htbp]
\centering
\includegraphics[scale=.9]{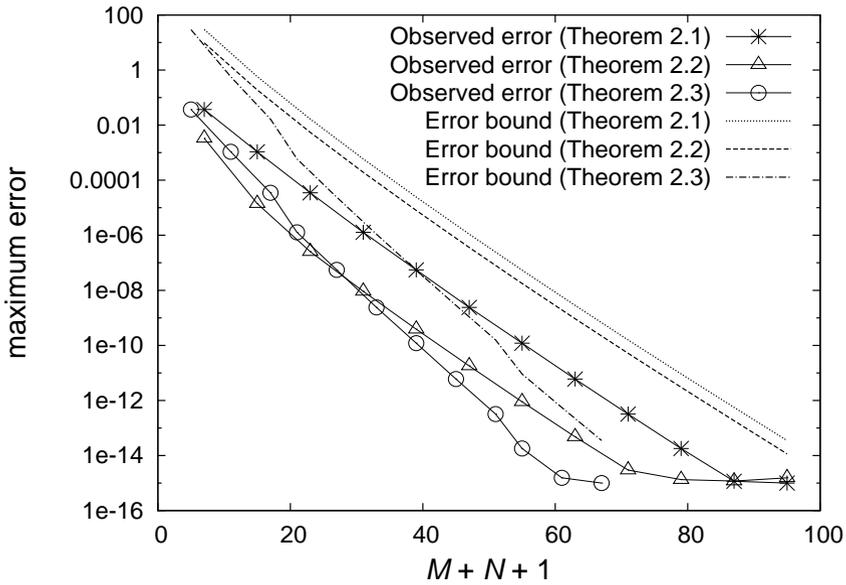}
\caption{Observed error and error bound for the function $f_1$ in~\eqref{eq:f_1}.}
\label{fig:1}
\end{figure}
\begin{figure}[htbp]
\centering
\includegraphics[scale=.9]{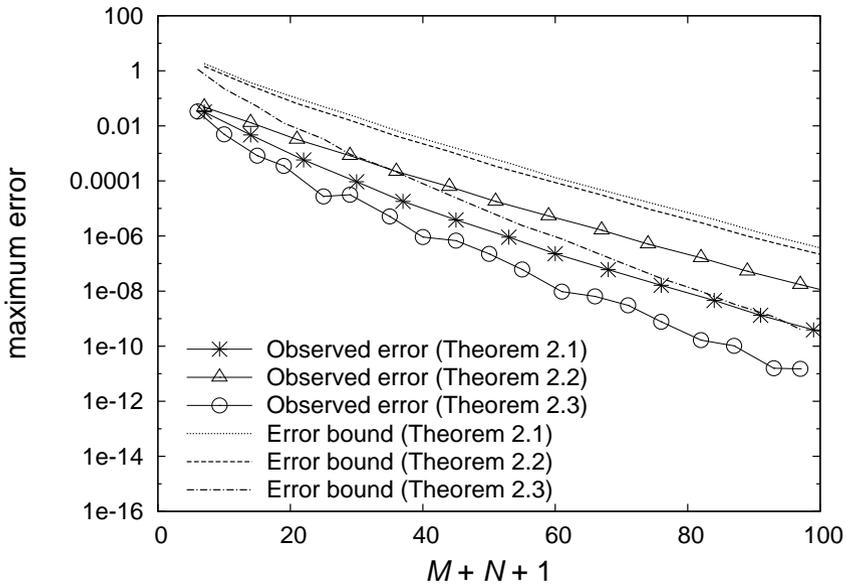}
\caption{Observed error and error bound for the function $f_2$ in~\eqref{eq:f_2}.}
\label{fig:2}
\end{figure}

\section{Proofs}
\label{sec4}

\subsection{Sketch of the proofs}

In this section, we provide the proofs
for Theorems~\ref{thm:new1} and~\ref{thm:new2}.
In both cases,
as outlined in Section~\ref{sec1},
we divide the approximation error into
the sum of the
discretization and truncation errors.
More precisely, we estimate the error as
\begin{align*}
& \left\lvert
  F(x) - \sum_{k=-M}^N F(kh)\sinc\left(\frac{x - kh}{h}\right)
 \right\rvert\\
&\leq
 \left\lvert
  F(x) - \sum_{k=-\infty}^{\infty} F(kh)\sinc\left(\frac{x - kh}{h}\right)
 \right\rvert\\
&\quad+\left\lvert
\sum_{k=-\infty}^{-M-1}F(kh)\sinc\left(\frac{x - kh}{h}\right)
+
\sum_{k=N+1}^{\infty}F(kh)\sinc\left(\frac{x - kh}{h}\right)
 \right\rvert.
\end{align*}
As for the first term (the discretization error),
we use the following bound.

\begin{theorem}[Okayama~{\cite[Part of Theorem 4.8]{Oka1}}]
\label{thm:discrete}
Assume that $F\in\LC_{L,R,\alpha,\beta}(\domD_d)$.
Then, it holds that
\begin{align*}
& \sup_{x\in\mathbb{R}}\left\lvert
F(x) - \sum_{k=-\infty}^{\infty}F(kh)\sinc\left(\frac{x - kh}{h}\right)
\right\rvert\\
&\leq \frac{4L}{\pi^2 d \mu(1 - \ee^{-2\pi d/h})\cos^{\alpha+\beta}((\pi/2)\sin d)\cos d}\ee^{-\pi d/h},
\end{align*}
where $\mu=\min\{\alpha,\beta\}$.
\end{theorem}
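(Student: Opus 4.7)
The plan is to combine the classical Stenger strip bound for the Sinc discretization error with a boundary estimate tailored to the analytic majorant appearing in the definition of $\LC_{L,R,\alpha,\beta}(\domD_d)$. The first step is to invoke the well-known result that for any $F$ analytic and absolutely integrable on $\domD_d$,
\[
\sup_{x\in\mathbb{R}}\left\lvert F(x)-\sum_{k=-\infty}^{\infty}F(kh)\sinc\!\left(\frac{x-kh}{h}\right)\right\rvert\;\leq\;\frac{1}{2\pi d\sinh(\pi d/h)}\,\mathcal{N}_1(F,\domD_d),
\]
where $\mathcal{N}_1(F,\domD_d)=\lim_{y\to d^{-}}\int_{-\infty}^{\infty}(\abs{F(x+\mathrm{i}y)}+\abs{F(x-\mathrm{i}y)})\,\dd x$. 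Rewriting $1/\sinh(\pi d/h)=2\ee^{-\pi d/h}/(1-\ee^{-2\pi d/h})$ already produces the prefactor $\ee^{-\pi d/h}/[\pi d(1-\ee^{-2\pi d/h})]$ appearing in the claim, so what remains is to show $\mathcal{N}_1(F,\domD_d)\leq 4L/[\pi\mu\cos^{\alpha+\beta}((\pi/2)\sin d)\cos d]$.

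The second step is to plug the pointwise majorant of the hypothesis into the boundary integral. For $\zeta=x\pm\mathrm{i}d$, one writes $\sinh\zeta=\sinh x\cos d\pm\mathrm{i}\cosh x\sin d$ and then lower-bounds $\abs{1+\ee^{\mp\pi\sinh\zeta}}^{\alpha}\abs{1+\ee^{\pm\pi\sinh\zeta}}^{\beta}$ uniformly in $x\in\mathbb{R}$; this is where the $\cos^{\alpha+\beta}((\pi/2)\sin d)$ factor arises. After the substitution $u=\sinh x\cos d$, whose Jacobian contributes the $1/\cos d$, the resulting one-dimensional integral splits into two tails that decay like $\ee^{-\alpha\pi\abs{u}}$ for $u<0$ and $\ee^{-\beta\pi u}$ for $u>0$; summing these gives $(1/\alpha+1/\beta)/\pi\leq 2/(\pi\mu)$ with $\mu=\min\{\alpha,\beta\}$. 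Doubling for the two boundary components $\Im\zeta=\pm d$ yields the factor $4$ in the numerator.

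The principal obstacle is the uniform lower bound on $\abs{1+\ee^{\pm\pi\sinh(x\pm\mathrm{i}d)}}$. Because $\Im(\pi\sinh\zeta)=\pm\pi\cosh x\sin d$ grows without bound as $\abs{x}\to\infty$, the phase of $\ee^{\pm\pi\sinh\zeta}$ sweeps through all values and the modulus can dip close to zero at isolated $x$, so naively this factor does not admit a positive lower bound. Obtaining the clean $\cos((\pi/2)\sin d)$ estimate, rather than a divergent one, requires a case split according to the magnitude of $\Re(\sinh\zeta)=\sinh x\cos d$: when this is large in absolute value, one of the two terms $1$ or $\ee^{\pm\pi\sinh\zeta}$ dominates and the phase is irrelevant; in the intermediate regime the hypothesis $0<d<\pi/2$ keeps $\sin d<1$, which is precisely what prevents worst-case destructive interference. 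Once this geometric estimate is secured, the rest of the argument is a routine integration.
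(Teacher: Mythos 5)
The paper does not prove Theorem~\ref{thm:discrete}; it is quoted verbatim from \cite{Oka1}, so there is no internal proof to compare against. Your reconstruction---Stenger's discretization bound $\mathcal{N}_1(F,\domD_d)/(2\pi d\sinh(\pi d/h))$ combined with the boundary estimate $\mathcal{N}_1(F,\domD_d)\le 4L/[\pi\mu\cos^{\alpha+\beta}((\pi/2)\sin d)\cos d]$, obtained from the lower bound on $\abs{1+\ee^{\pm\pi\sinh\zeta}}$ on $\Im\zeta=\pm d$ (where $d<\pi/2$ is essential) followed by the substitution $u=\sinh x\cos d$ and the tail integrals giving $(1/\alpha+1/\beta)/\pi\le 2/(\pi\mu)$---is exactly the route taken in the cited source, and all the constants match.
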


As for the second term (the truncation error),
we give the following bound.
The proof is provided in Section~\ref{lem:new}.

\begin{lemma}
\label{lem:new}
Assume that $F\in\LC_{L,R,\alpha,\beta}(\domD_d)$.
Let $\mu=\min\{\alpha,\beta\}$,
and let $M$ and $N$ be defined as in~\eqref{eq:improve-M-N-2}.
Then, it holds that
\begin{align*}
&\sup_{x\in\mathbb{R}}\left\lvert
\sum_{k=-\infty}^{-M-1}F(kh)\sinc\left(\frac{x - kh}{h}\right)
+
\sum_{k=N+1}^{\infty}F(kh)\sinc\left(\frac{x - kh}{h}\right)
 \right\rvert\\
&\leq \frac{2R}{\pi\mu h\sqrt{1 + \{q(dn/\mu)\}^2}}\ee^{-\pi\mu q(dn/\mu)}.
\end{align*}
\end{lemma}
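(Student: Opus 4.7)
The plan is to bound the two tails separately by brute force, using $|\sinc(\cdot)|\le 1$ on the real line, so that it suffices to estimate $\sum_{k=N+1}^{\infty}|F(kh)|$ and $\sum_{k=-\infty}^{-M-1}|F(kh)|$. From the decay hypothesis \eqref{eq:double-exp-decay}, for $x>0$ the factor $(1+\ee^{\pi\sinh x})^{\beta}\ge \ee^{\beta\pi\sinh x}$ (and the other factor is $\ge 1$), so $|F(x)|\le R\ee^{-\beta\pi\sinh x}$; symmetrically for $x<0$ we get $|F(x)|\le R\ee^{\alpha\pi\sinh x}$. Applying these at $x=kh$ reduces the task to bounding the one-sided sums $\sum_{k=N+1}^{\infty}\ee^{-\beta\pi\sinh(kh)}$ and $\sum_{k=M+1}^{\infty}\ee^{-\alpha\pi\sinh(kh)}$.

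For each such sum, I would compare with an integral (the integrand is decreasing in $k$) and then use convexity of $\sinh$: for $x\ge N$,
\[
\sinh(xh)\ge \sinh(Nh)+(x-N)h\cosh(Nh),
\]
the tangent-line lower bound. This yields
\[
\sum_{k=N+1}^{\infty}\ee^{-\beta\pi\sinh(kh)}\le \int_{N}^{\infty}\ee^{-\beta\pi\sinh(xh)}\,\dd x \le \frac{\ee^{-\beta\pi\sinh(Nh)}}{\beta\pi h\cosh(Nh)},
\]
and analogously for the $M$-side sum.

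Next I would feed in the definitions from \eqref{eq:improve-M-N-2}. Since $Nh\ge\arsinh((\mu/\beta)q(dn/\mu))$, we get $\sinh(Nh)\ge (\mu/\beta)q(dn/\mu)$, hence $\beta\pi\sinh(Nh)\ge \pi\mu q(dn/\mu)$, giving the correct exponent; and
\[
\beta\cosh(Nh)\ge \beta\sqrt{1+(\mu/\beta)^{2}q(dn/\mu)^{2}}=\sqrt{\beta^{2}+\mu^{2}q(dn/\mu)^{2}}\ge \mu\sqrt{1+q(dn/\mu)^{2}},
\]
the last step using $\beta\ge\mu$. The same identities hold with $\alpha$ in place of $\beta$ (since also $\alpha\ge\mu$), producing an identical bound for the $M$-side sum. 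Adding the two tails yields exactly
\[
\frac{2R}{\pi\mu h\sqrt{1+\{q(dn/\mu)\}^{2}}}\ee^{-\pi\mu q(dn/\mu)},
\]
as claimed.

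The only slightly delicate point, and the one I would be most careful about, is the passage $\beta\cosh(Nh)\ge\mu\sqrt{1+q(dn/\mu)^{2}}$ (and its $\alpha$ analogue): one must verify that the factor $\beta$ (resp.\ $\alpha$) appearing naturally from differentiating $\sinh$ can be absorbed into a uniform $\mu$ in the denominator, which is precisely where the choice of normalization $(\mu/\beta)q$, $(\mu/\alpha)q$ inside the $\arsinh$ in \eqref{eq:improve-M-N-2} pays off. Everything else is routine integral estimation and use of the definitions.
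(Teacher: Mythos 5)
Your proposal is correct and follows essentially the same route as the paper: bound $\lvert\sinc\rvert$ by $1$, reduce to the one-sided tails via the decay bound $\abs{F(x)}\le R\ee^{-\pi\beta\sinh x}$ (resp.\ $R\ee^{\pi\alpha\sinh x}$), compare the sums with integrals, and then use $Nh\ge\arsinh((\mu/\beta)q(dn/\mu))$, $Mh\ge\arsinh((\mu/\alpha)q(dn/\mu))$ together with $\alpha,\beta\ge\mu$ exactly as in the paper. The only cosmetic difference is that you bound $\int_{Nh}^{\infty}\ee^{-\pi\beta\sinh x}\,\dd x$ by the tangent-line (convexity) estimate for $\sinh$, whereas the paper inserts the factor $\cosh(x)/\cosh(Nh)\ge 1$ to integrate exactly; both give the identical bound $\ee^{-\pi\beta\sinh(Nh)}/(\pi\beta\cosh(Nh))$.
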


We then specify the relation between $h$ and $n$,
considering the exponential rates of the two errors:
$\ee^{-\pi d/h}$ and $\ee^{-\pi\mu q(dn/\mu)}$.
In Theorem~\ref{thm:new1},
we set the mesh size $h$ as in~\eqref{eq:improve-h-DE-1}.
In this case, the formula~\eqref{eq:improve-M-N-2}
is rewritten as~\eqref{eq:improve-M-N-1},
from which $n = \max\{M, N\}$ holds.
Furthermore,
we have
\begin{align*}
 (\text{the discretization error}) &= \OO(\ee^{-\pi d n/\arsinh(q(dn/\mu))}),\\
 (\text{the truncation error}) &= \OO(\ee^{-\pi d n/\arsinh(dn/\mu)}),
\end{align*}
and the latter order is slightly worse than the former.
Therefore, the convergence rate of the overall error is given as
$\OO(\ee^{-\pi d n/\arsinh(dn/\mu)})$.
The detailed proof is provided in Section~\ref{sec:proof1}.

In Theorem~\ref{thm:new2},
we set the mesh size $h$ as in~\eqref{eq:improve-h-DE-2}.
In this case, although $n=\max\{M, N\}$ does not hold,
$\ee^{-\pi d/h}=\ee^{-\pi\mu q(dn/\mu)}$ hold,
i.e., we have
\begin{align*}
 (\text{the discretization error}) &= \OO(\ee^{-\pi d n/\arsinh(dn/\mu)}),\\
 (\text{the truncation error}) &= \OO(\ee^{-\pi d n/\arsinh(dn/\mu)}),
\end{align*}
and both orders are exactly the same.
Thus, the convergence rate of the overall error is given as
$\OO(\ee^{-\pi d n/\arsinh(dn/\mu)})$.
The detailed proof is provided in Section~\ref{sec:proof2}.

\subsection{Proof of Lemma~\ref{lem:new}}

We show Lemma~\ref{lem:new} as follows.
\begin{proof}
First, using $\abs{\sinc x}\leq 1$, we have
\begin{align*}
&\left\lvert
\sum_{k=-\infty}^{-M-1}F(kh)\sinc\left(\frac{x - kh}{h}\right)
+
\sum_{k=N+1}^{\infty}F(kh)\sinc\left(\frac{x - kh}{h}\right)
 \right\rvert\\
&\leq \sum_{k=-\infty}^{-M-1}\abs{F(kh)} + \sum_{k=N+1}^{\infty}\abs{F(kh)}.
\end{align*}
Using~\eqref{eq:double-exp-decay}, we estimate the second term as
\begin{align*}
 \sum_{k=N+1}^{\infty}\abs{F(kh)}
&\leq R\sum_{k=N+1}^{\infty}
\frac{1}{(1+\ee^{-\pi\sinh(kh)})^{\alpha}(1+\ee^{\pi\sinh(kh)})^{\beta}}\\
&=R\sum_{k=N+1}^{\infty}
\frac{\ee^{-\pi\beta\sinh(kh)}}{(1+\ee^{-\pi\sinh(kh)})^{\alpha+\beta}}\\
&\leq R\sum_{k=N+1}^{\infty}
\frac{\ee^{-\pi\beta\sinh(kh)}}{(1 + 0)^{\alpha+\beta}}\\
&\leq \frac{R}{h}\int_{Nh}^{\infty}\ee^{-\pi\beta\sinh x}\dd x\\
&\leq \frac{R}{h\pi\beta \cosh(Nh)}
\int_{Nh}^{\infty}\pi\beta\cosh(x)\ee^{-\pi\beta\sinh x}\dd x\\
&= \frac{R}{h\pi\beta\cosh(Nh)}\ee^{-\pi\beta\sinh(Nh)}.
\end{align*}
Furthermore, using~\eqref{eq:improve-M-N-2}, we have
\begin{align*}
& \frac{R}{h\pi\beta\cosh(Nh)}\ee^{-\pi\beta\sinh(Nh)}\\
&\leq \frac{R}{h\pi\beta\cosh(\arsinh((\mu/\beta)q(dn/\mu)))}
\ee^{-\pi\beta\sinh(\arsinh((\mu/\beta)q(dn/\mu)))}\\
&=\frac{R}{h\pi\beta\sqrt{1 + \{(\mu/\beta)q(dn/\mu)\}^2}}
\ee^{-\pi\mu q(dn/\mu)}\\
&=\frac{R}{h\pi\sqrt{\beta^2 + \mu^2\{q(dn/\mu)\}^2}}
\ee^{-\pi\mu q(dn/\mu)}\\
&\leq\frac{R}{h\pi\sqrt{\mu^2 + \mu^2\{q(dn/\mu)\}^2}}
\ee^{-\pi\mu q(dn/\mu)}\\
&=\frac{R}{h\pi\mu\sqrt{1 + \{q(dn/\mu)\}^2}}
\ee^{-\pi\mu q(dn/\mu)}.
\end{align*}
In the same way,
using~\eqref{eq:double-exp-decay} and~\eqref{eq:improve-M-N-2},
we estimate the first term as
\[
 \sum_{k=-\infty}^{-M-1}\abs{F(kh)}
\leq \frac{R}{h\pi\mu\sqrt{1 + \{q(dn/\mu)\}^2}}
\ee^{-\pi\mu q(dn/\mu)}.
\]
Thus, we obtain the desired inequality.
\end{proof}

\begin{remark}
As seen in this proof,
the formula~\eqref{eq:improve-M-N-2} is designed so that
$\ee^{-\pi\alpha\sinh(Mh)}\leq \ee^{-\pi\mu q(dn/\mu)}$
and
$\ee^{-\pi\beta\sinh(Nh)}\leq \ee^{-\pi\mu q(dn/\mu)}$
hold.
We note that the explicit form of the function $q(x)$ is not used
in this proof;
this point is discussed in Section~\ref{sec:remark-q}.
\end{remark}

\subsection{Proof of Theorem~\ref{thm:new1}}
\label{sec:proof1}

For the proof of Theorem~\ref{thm:new1},
we use the following two propositions.

\begin{proposition}[Okayama and Kurogi~{\cite[Proposition 8]{OkaKuro}}]
\label{prop:p-q-r}
Let $p(x)$ and $q(x)$ be defined by~\eqref{eq:p-x}
and~\eqref{eq:q-x}, respectively.
Then, $p(x)$ and $r(x) = p(x) - q(x)$
are monotonically increasing functions for $x\geq 0$.
\end{proposition}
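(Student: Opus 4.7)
My plan is to reparametrize by $y=\arsinh x$, which is a nonnegative strictly increasing function of $x\ge 0$. Writing $x=\sinh y$ and introducing the auxiliary functions $f(u)=\sinh(u)/u$ and $g(u)=u/\arsinh(u)$ (both extended by continuity at $u=0$ to the value $1$), an elementary rearrangement gives the compact representations
\[
 q(x)=f(y),\qquad p(x)=y\,g(f(y)).
\]

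For $p$, monotonicity is then almost immediate. The function $f$ is strictly increasing on $[0,\infty)$ via its power series $f(u)=\sum_{k\ge 0}u^{2k}/(2k+1)!$, and $g$ is strictly increasing because the substitution $u=\sinh w$ shows $g(\sinh w)=f(w)$, making $g=f\circ\arsinh$ a composition of increasing functions. Hence $g\circ f$ is positive and strictly increasing in $y$; multiplying by the strictly increasing factor $y$ preserves strict monotonicity, and since $y=\arsinh x$ is strictly increasing in $x$, so is $p(x)$.

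For $r=p-q$ the argument is more delicate. Setting $A=A(y)=\arsinh(f(y))$, so that $\sinh A=f(y)$ and equivalently $y\sinh A=\sinh y$, one rewrites $r=\sinh(A)(y-A)/A$. Implicit differentiation yields $\cosh(A)\,A'(y)=f'(y)=(\cosh y-f(y))/y$, and the inequality $r'(y)>0$ reduces to a single algebraic inequality in $y$, $A$, and the hyperbolic functions evaluated at these two arguments, linked through $y\sinh A=\sinh y$. I would then try to organise the resulting expression so that its positivity follows from the two easy bounds $y\cosh y>\sinh y$ and $A\cosh A>\sinh A$ for positive arguments, each of which is equivalent to the strict increase of $u\mapsto\sinh(u)/u$.

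The hard part will be that a direct splitting of $r'(y)$ into separately-signed pieces is not enough: a routine asymptotic check shows that as $y\to\infty$ the leading exponential contributions of the positive and negative pieces cancel, so term-by-term positivity is unavailable. The real work is therefore in a careful algebraic regrouping — or, alternatively, in switching parameter to $A$ via $dy/dA=y^{2}\cosh(A)/(y\cosh y-\sinh y)>0$ — that may render the final sign transparent. Sanity checks at $y\to 0^{+}$ (where $r\to-1$), at $y=1$ (where $A=1$ and $r=0$), and at $y\to\infty$ (where $r\to\infty$) are all consistent with global monotonicity and support this strategy.
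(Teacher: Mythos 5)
Your treatment of $p$ is correct and complete: with $y=\arsinh x$, the identities $q=f(y)$ and $p=y\,g(f(y))$ for $f(u)=\sinh(u)/u$ and $g=f\circ\arsinh$ are verified easily, $f$ is increasing by its power series, hence so is $g$, and a product of nonnegative increasing functions of the increasing variable $y=\arsinh x$ is increasing. (For reference, the paper itself gives no proof of this proposition; it is imported from Okayama and Kurogi, so your argument is measured on its own merits.)

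The genuine gap is the second half, the monotonicity of $r=p-q$, which is in fact the part the paper actually needs: in the proof of Theorem~\ref{thm:new1} the factor $\ee^{-\pi\mu\{p(dn/\mu)-q(dn/\mu)\}}$ is bounded by its value at $n=1$ precisely because $r$ is increasing. Your reduction is fine as far as it goes --- $r=\sinh(A)\,(y-A)/A$ with $y\sinh A=\sinh y$, and $\cosh(A)\,A'(y)=(\cosh y-f(y))/y$ --- but from there the text only announces intentions: ``I would then try to organise the resulting expression'', ``a careful algebraic regrouping \dots may render the final sign transparent''. You yourself identify why the obvious route fails (the leading exponential contributions of the positive and negative pieces of $r'(y)$ cancel as $y\to\infty$, so no term-by-term sign argument is available), yet the regrouping or change of parameter that would overcome this is never exhibited, and the sign of $r'$ is never established. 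The endpoint checks $r\to-1$ at $y\to0^{+}$, $r=0$ at $y=1$, and $r\to\infty$ as $y\to\infty$ are consistent with the claim but say nothing about monotonicity in between. As submitted, the proposal proves that $p$ is increasing but leaves the increase of $r$ as an unproved plan, so the proposition is not established.
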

\begin{proposition}
\label{prop:sinh}
For all $t\geq 0$, we have
\begin{equation}
 \sinh t\geq t \sinh\left(\frac{2}{\pi}t\right).
\label{eq:sinh}
\end{equation}
\end{proposition}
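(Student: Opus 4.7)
Set $F(t):=\sinh t - t\sinh(2t/\pi)$. Since $F(0)=0$ and $F(t)\to+\infty$ as $t\to\infty$ (because $\cosh t$ dominates $t\cosh(2t/\pi)$), it suffices to prove $F(t)\ge 0$ on $[0,\infty)$. My plan is to split the argument at $t=\pi/2$, since the easy algebra breaks down precisely there.

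For $0\le t\le \pi/2$, I would use convexity of $\sinh$ on $[0,\infty)$ together with $\sinh(0)=0$, which gives the scaling bound $\sinh(\lambda x)\le \lambda\sinh(x)$ for $\lambda\in[0,1]$ and $x\ge 0$. Taking $\lambda=2/\pi$ and $x=t$ yields $\sinh(2t/\pi)\le (2/\pi)\sinh t$, and multiplying by $t$ gives $t\sinh(2t/\pi)\le(2t/\pi)\sinh t\le\sinh t$, where the last step uses $2t/\pi\le 1$. This closes the easy range.

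For $t\ge\pi/2$ the plain convexity bound loses too much, so I would invoke the addition formula with the decomposition $t=(2t/\pi)+\gamma t$, $\gamma:=1-2/\pi$, giving
\[
\sinh t = \sinh(2t/\pi)\cosh(\gamma t)+\cosh(2t/\pi)\sinh(\gamma t),
\]
and divide by $\sinh(2t/\pi)$ to reduce the target to
\[
\cosh(\gamma t)+\coth(2t/\pi)\sinh(\gamma t)\ge t.
\]
Writing $\coth(2t/\pi)=1+2/(\ee^{4t/\pi}-1)$ collapses the leading piece to $\ee^{\gamma t}$, which is slightly \emph{smaller} than $t$ around $t=1/\gamma$; the correction $2\sinh(\gamma t)/(\ee^{4t/\pi}-1)$ must therefore be retained and bounded from below carefully to close the gap.

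The hard part is the extreme tightness of the inequality near $t\approx\pi$, where the ratio $\sinh t/(t\sinh(2t/\pi))$ numerically dips to roughly $1.01$: any estimate used in the second step must be sharp to within a few percent, ruling out crude bounds such as $\coth(2t/\pi)\ge 1$. Should this direct route prove too delicate, a fallback is to study the critical points of $F'(t)=\cosh t-\sinh(2t/\pi)-(2t/\pi)\cosh(2t/\pi)$, show that $F'$ has a unique sign change from negative to positive on $(0,\infty)$, and verify via the critical equation that $F$ remains non-negative at its unique interior local minimum.
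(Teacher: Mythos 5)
Your reduction is sound as far as it goes, and the easy range is genuinely closed: convexity of $\sinh$ together with $\sinh 0=0$ gives $\sinh(2t/\pi)\le(2/\pi)\sinh t$, hence $t\sinh(2t/\pi)\le\sinh t$ for $0\le t\le\pi/2$. But for $t\ge\pi/2$ you stop exactly where the work begins. After dividing by $\sinh(2t/\pi)$, the target is $\ee^{(1-2/\pi)t}+\frac{2\sinh((1-2/\pi)t)}{\ee^{4t/\pi}-1}\ge t$, and, as you yourself note, the leading term alone fails on an interval around $t\approx2.8$: the minimum of $\ee^{(1-2/\pi)t}-t$ is about $-0.033$, attained at $t=\frac{\pi}{\pi-2}\log\frac{\pi}{\pi-2}\approx2.79$. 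Everything therefore hinges on a lower bound for the correction term (which at that point is only about $0.07$, so the margin is a few percent), and you never supply one; ``must be retained and bounded from below carefully'' is a plan, not an argument. The fallback via the critical points of $F'(t)=\cosh t-\sinh(2t/\pi)-(2t/\pi)\cosh(2t/\pi)$ is likewise only sketched: you do not establish the single sign change of $F'$, nor carry out the verification at the interior minimum. So the proposal is incomplete precisely on the tight range it correctly identifies as the crux.

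For comparison, your decomposition is in substance the paper's: multiplying your reduced inequality back by $\sinh(2t/\pi)$, and using $2\sinh(2t/\pi)/(\ee^{4t/\pi}-1)=\ee^{-2t/\pi}$, gives $f(t)=g(t)\sinh\bigl(\tfrac{2}{\pi}t\bigr)+h(t)$ with $g(t)=\ee^{(1-2/\pi)t}-t$ and $h(t)=\ee^{-2t/\pi}\sinh\bigl((1-\tfrac{2}{\pi})t\bigr)$. The paper finishes by an elementary three-case monotonicity analysis at the cut points $\pi\log2$ and $\pi\log3$: outside the middle window $g$ is positive and there is nothing to do, while on $[\pi\log2,\pi\log3)$ it bounds $g(t)\ge g(\gamma)$ at the critical point $\gamma$ of $g$, uses that $h$ is decreasing there, and checks $g(\gamma)\sinh(2\log3)+h(\pi\log3)>0$ (numerically roughly $-0.149+0.179$). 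Some quantitative case analysis of this kind --- or an equally explicit treatment of the unique interior minimum of $F$ --- is what your write-up still needs in order to become a proof.
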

\begin{proof}
Setting $f(t)=\sinh(t) - t\sinh((2/\pi) t)$,
we show $f(t)\geq 0$ for $t\geq 0$, from which we obtain the conclusion.
Using
\begin{align*}
&\sinh t\\
&=\sinh\left(\frac{2}{\pi}t + \left(1 - \frac{2}{\pi}\right)t\right)\\
&=\sinh\left(\frac{2}{\pi}t\right)
\cosh\left(\left(1 - \frac{2}{\pi}\right)t\right)
+\cosh\left(\frac{2}{\pi}t\right)
\sinh\left(\left(1 - \frac{2}{\pi}\right)t\right)\\
&=\sinh\left(\frac{2}{\pi}t\right)
\cosh\left(\left(1 - \frac{2}{\pi}\right)t\right)
+\left\{\sinh\left(\frac{2}{\pi}t\right) + \exp\left(-\frac{2}{\pi}t\right)\right\}
\sinh\left(\left(1 - \frac{2}{\pi}\right)t\right)\\
&=\sinh\left(\frac{2}{\pi}t\right)
\exp\left(\left(1 - \frac{2}{\pi}\right)t\right)
+\exp\left(-\frac{2}{\pi}t\right)
\sinh\left(\left(1 - \frac{2}{\pi}\right)t\right),
\end{align*}
we have
\begin{align*}
 f(t) &= g(t)\sinh\left(\frac{2}{\pi}t\right) + h(t),
\end{align*}
where
\begin{align*}
 g(t) &= \exp\left(\left(1 - \frac{2}{\pi}\right)t\right) - t,\\
 h(t) &= \exp\left(-\frac{2}{\pi}t\right)
\sinh\left(\left(1 - \frac{2}{\pi}\right)t\right).
\end{align*}
Here, we set $\gamma$ and $\delta$ as
\begin{align*}
 \gamma &= \frac{\pi}{\pi - 2}\log\left(\frac{\pi}{\pi - 2}\right),\\
 \delta &= \frac{\pi}{2(\pi - 2)}\log\left(\frac{\pi}{4 - \pi}\right),
\end{align*}
which satisfy $g'(\gamma)=0$ and $h'(\delta)=0$.
In what follows, we consider three cases:
(i) $0\leq t < \pi \log 2$,
(ii) $\pi\log 2\leq t < \pi\log 3$, and
(iii) $\pi \log 3 \leq t$, and we show $f(t)\geq 0$ in each case.

(i) Consider the case $0\leq t < \pi \log 2$.
Because $g(t)$ is monotonically decreasing
for $t\in (0, \gamma)$,
noting $\pi\log 2< \gamma$,
we have
\[
 f(t) \geq g(\pi\log 2)\sinh\left(\frac{2}{\pi}t\right) + h(t).
\]
Furthermore, from $g(\pi\log 2)>0$,
$\sinh((2/\pi)t)\geq 0$ and $h(t)\geq 0$, we have $f(t)\geq 0$.

(ii) Consider the case $\pi\log 2\leq t < \pi\log 3$.
Because $g(t)$ has its minimum at $t = \gamma$,
noting $\gamma\in (\pi\log2, \pi\log 3)$,
we have
\[
 f(t)\geq g(\gamma)\sinh\left(\frac{2}{\pi}t\right) + h(t).
\]
Because $g(\gamma) < 0$,
$g(\gamma)\sinh((2/\pi)t)$ is a monotonically decreasing function.
Furthermore, because $h(t)$ is monotonically decreasing for $t\geq \delta$,
noting $\delta < \pi\log 2$, we have
\[
 g(\gamma)\sinh\left(\frac{2}{\pi}t\right) + h(t)
\geq g(\gamma)\sinh\left(\frac{2}{\pi}\cdot\pi\log 3\right) + h(\pi\log 3)
> 0.
\]

(iii) Consider the case $\pi \log 3 \leq t$.
Because $g(t)$ is monotonically increasing
for $t\geq \gamma$,
noting $\gamma < \pi\log 3$, we have
\[
 f(t)\geq g(\pi\log 3)\sinh\left(\frac{2}{\pi}t\right) + h(t).
\]
Furthermore, because $g(\pi\log 3)>0$,
$\sinh((2/\pi) t)>0$, and $h(t)>0$,
we have $f(t)>0$.
This completes the proof.
\end{proof}

Using these propositions,
we prove Theorem~\ref{thm:new1} as follows.

\begin{proof}
From Theorem~\ref{thm:discrete} and Lemma~\ref{lem:new},
substituting~\eqref{eq:improve-h-DE-1},
we have
\begin{align}
& \sup_{x\in\mathbb{R}}\left\lvert
F(x) - \sum_{k=-M}^N F(kh)\sinc\left(\frac{x - kh}{h}\right)
\right\rvert\nonumber\\
&\leq\frac{4L}{\pi^2 d \mu\cos^{\alpha+\beta}((\pi/2)\sin d)\cos d}
\cdot\frac{\ee^{-\pi d/h}}{(1 - \ee^{-2\pi d/h})}\nonumber\\
&\quad + \frac{2 R}{\pi \mu h\sqrt{1 + \{q(dn/\mu)\}^2}}
\ee^{-\pi \mu q(dn/\mu)}\nonumber\\
&=\frac{4L}{\pi^2 d \mu\cos^{\alpha+\beta}((\pi/2)\sin d)\cos d}
\cdot\frac{\ee^{-\pi\mu p(dn/\mu)}}{(1 - \ee^{-2\pi\mu p(dn/\mu)})}\nonumber\\
&\quad+ \frac{2 R (dn/\mu)}{\pi d\arsinh(q(dn/\mu))\sqrt{1 + \{q(dn/\mu)\}^2}}
\ee^{-\pi\mu q(dn/\mu)}.
\label{eq:bound-disc-trunc}
\end{align}
Here, for $t>0$, the inequality~\eqref{eq:sinh} is rewritten as
\begin{align*}
 \frac{\sinh t}{t}&\geq \sinh\left(\frac{2}{\pi}t\right)\\
\Leftrightarrow\quad
\arsinh\left(\frac{\sinh t}{t}\right)&\geq \frac{2}{\pi} t\\
\Leftrightarrow\quad
\frac{t}{\arsinh\left(\frac{\sinh t}{t}\right)}
&\leq \frac{\pi}{2}.
\end{align*}
Putting $t=\arsinh(dn/\mu)$ in the last inequality, we have
\[
 \frac{\arsinh(dn/\mu)}{\arsinh\left(\frac{dn/\mu}{\arsinh(dn/\mu)}\right)}
=\frac{\arsinh(dn/\mu)}{\arsinh(q(dn/\mu))}\leq \frac{\pi}{2}.
\]
Therefore, the second term in~\eqref{eq:bound-disc-trunc}
is bounded as
\begin{align*}
&\frac{2 R (dn/\mu)}{\pi d\arsinh(q(dn/\mu))\sqrt{1 + \{q(dn/\mu)\}^2}}
\ee^{-\pi\mu q(dn/\mu)}\\
&=\frac{2R}{\pi d}\cdot
\frac{\arsinh(dn/\mu)}{\arsinh(q(dn/\mu))}\cdot
\frac{q(dn/\mu)}{\sqrt{1 + \{q(dn/\mu)\}^2}}
\ee^{-\pi\mu q(dn/\mu)}\\
&\leq \frac{2R}{\pi d}\cdot\frac{\pi}{2}\cdot
\frac{q(dn/\mu)}{\sqrt{0 + \{q(dn/\mu)\}^2}}\ee^{-\pi\mu q(dn/\mu)}\\
&=\frac{R}{d}\ee^{-\pi\mu q(dn/\mu)}.
\end{align*}
Furthermore, from Proposition~\ref{prop:p-q-r},
the first term in~\eqref{eq:bound-disc-trunc}
is bounded as
\begin{align*}
&\frac{4L}{\pi^2 d \mu\cos^{\alpha+\beta}((\pi/2)\sin d)\cos d}
\cdot\frac{\ee^{-\pi\mu p(dn/\mu)}}{(1 - \ee^{-2\pi\mu p(dn/\mu)})}\\
&=\frac{4L}{\pi^2 d \mu\cos^{\alpha+\beta}((\pi/2)\sin d)\cos d}
\cdot
\frac{\ee^{-\pi\mu \{p(dn/\mu) - q(dn/\mu)\}}}{(1 - \ee^{-2\pi\mu p(dn/\mu)})}
\ee^{-\pi \mu q(dn/\mu)}\\
&\leq\frac{4L}{\pi^2 d \mu\cos^{\alpha+\beta}((\pi/2)\sin d)\cos d}
\cdot
\frac{\ee^{-\pi\mu \{p(d/\mu) - q(d/\mu)\}}}{(1 - \ee^{-2\pi\mu p(d/\mu)})}
\ee^{-\pi \mu q(dn/\mu)}.
\end{align*}
This completes the proof.
\end{proof}

\subsection{Proof of Theorem~\ref{thm:new2}}
\label{sec:proof2}

For the proof of Theorem~\ref{thm:new2},
we use the following proposition.

\begin{proposition}[Okayama and Kawai~{\cite[Proposition 7]{OkaKawai}}]
\label{prop:q-x}
Let $q(x)$ be defined by~\eqref{eq:q-x}.
Then, $q(x)$
is a monotonically increasing function for $x\geq 0$.
\end{proposition}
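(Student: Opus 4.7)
The plan is to prove monotonicity by a direct calculus argument on $q'$. Since $\arsinh(0)=0$, the ratio $q(x)=x/\arsinh x$ is naturally extended by continuity at the origin via the expansion $\arsinh x = x - x^3/6 + \OO(x^5)$, which gives $q(0)=1$ and makes $q$ smooth in a neighbourhood of $0$. It therefore suffices to show $q'(x)\geq 0$ for all $x>0$, and then invoke this continuous extension at $x=0$.

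Applying the quotient rule together with $(\arsinh x)'=1/\sqrt{1+x^2}$ yields
\[
 q'(x) = \frac{\arsinh(x) - x/\sqrt{1+x^2}}{(\arsinh x)^2}.
\]
Hence for $x>0$, the sign of $q'(x)$ matches that of the numerator $g(x):=\arsinh(x) - x/\sqrt{1+x^2}$. I would next show $g(x)\geq 0$ on $[0,\infty)$ by noting $g(0)=0$ and computing
\[
 g'(x) = \frac{1}{\sqrt{1+x^2}} - \frac{(1+x^2) - x^2}{(1+x^2)^{3/2}} = \frac{1}{\sqrt{1+x^2}} - \frac{1}{(1+x^2)^{3/2}} = \frac{x^2}{(1+x^2)^{3/2}} \geq 0,
\]
so $g$ is non-decreasing on $[0,\infty)$ and consequently $g(x)\geq g(0)=0$ throughout.

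Combining these steps gives $q'(x)\geq 0$ for $x>0$, and the continuous extension at $x=0$ upgrades this to monotonicity on $[0,\infty)$. There is no substantive obstacle in this proof; the only minor subtlety is ensuring that the apparent $0/0$ behaviour of $q$ at the origin is handled correctly, which is addressed by the Taylor expansion of $\arsinh$ recorded at the outset.
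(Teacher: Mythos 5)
Your proof is correct. Note, however, that the paper does not prove this proposition at all: it imports it verbatim from Okayama and Kawai~\cite[Proposition 7]{OkaKawai}, so there is no in-paper argument to compare against. Your elementary calculus argument --- reducing the sign of $q'(x)$ to the inequality $\arsinh x \geq x/\sqrt{1+x^2}$, which you verify by differentiating $g(x)=\arsinh x - x/\sqrt{1+x^2}$ and observing $g'(x)=x^2/(1+x^2)^{3/2}\geq 0$ with $g(0)=0$ --- is a clean, self-contained verification of the cited result, and the treatment of the removable singularity at $x=0$ via the expansion of $\arsinh$ is handled properly. Nothing is missing.
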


Using this result, we prove Theorem~\ref{thm:new2} as follows.
\begin{proof}
From Theorem~\ref{thm:discrete} and Lemma~\ref{lem:new},
substituting~\eqref{eq:improve-h-DE-2},
we have
\begin{align}
&\sup_{x\in\mathbb{R}}\left\lvert
F(x) - \sum_{k=-M}^N F(kh)\sinc\left(\frac{x - kh}{h}\right)
\right\rvert\nonumber\\
&\leq\frac{4L}{\pi^2 d \mu\cos^{\alpha+\beta}((\pi/2)\sin d)\cos d}
\cdot\frac{\ee^{-\pi d/h}}{(1 - \ee^{-2\pi d/h})}\nonumber\\
&\quad + \frac{2 R}{\pi \mu h\sqrt{1 + \{q(dn/\mu)\}^2}}
\ee^{-\pi \mu q(dn/\mu)}\nonumber\\
&=\frac{4L}{\pi^2 d \mu\cos^{\alpha+\beta}((\pi/2)\sin d)\cos d}
\cdot\frac{\ee^{-\pi\mu q(dn/\mu)}}{(1 - \ee^{-2\pi \mu q(dn/\mu)})}\nonumber\\
&\quad + \frac{2 R q(dn/\mu)}{\pi d\sqrt{1 + \{q(dn/\mu)\}^2}}
\ee^{-\pi \mu q(dn/\mu)}.
\label{eq:bound-disc-trunc-2}
\end{align}
The second term in~\eqref{eq:bound-disc-trunc-2}
is bounded as
\begin{align*}
\frac{2 R q(dn/\mu)}{\pi d\sqrt{1 + \{q(dn/\mu)\}^2}}
\ee^{-\pi \mu q(dn/\mu)}
&=\frac{2R}{\pi d}\cdot\frac{q(dn/\mu)}{\sqrt{1 + \{q(dn/\mu)\}^2}}
\ee^{-\pi\mu q(dn/\mu)}\\
&\leq\frac{2R}{\pi d}\cdot\frac{q(dn/\mu)}{\sqrt{0 + \{q(dn/\mu)\}^2}}
\ee^{-\pi\mu q(dn/\mu)}\\
&=\frac{2R}{\pi d}\ee^{-\pi\mu q(dn/\mu)}.
\end{align*}
Furthermore,
because $q(x)$ monotonically increases for $x\geq 0$,
the first term in~\eqref{eq:bound-disc-trunc-2}
is bounded as
\begin{align*}
&\frac{4L}{\pi^2 d \mu\cos^{\alpha+\beta}((\pi/2)\sin d)\cos d}
\cdot\frac{\ee^{-\pi\mu q(dn/\mu)}}{(1 - \ee^{-2\pi\mu q(dn/\mu)})}\\
&\leq\frac{4L}{\pi^2 d \mu\cos^{\alpha+\beta}((\pi/2)\sin d)\cos d}
\cdot\frac{\ee^{-\pi\mu q(dn/\mu)}}{(1 - \ee^{-2\pi\mu q(d/\mu)})}.
\end{align*}
This completes the proof.
\end{proof}

\subsection{Remark on the function $q(x)$}
\label{sec:remark-q}

So far, we have defined the function $q(x)$ by~\eqref{eq:q-x}.
However, notice that
in the proof of Theorem~\ref{thm:discrete} and Lemma~\ref{lem:new},
we do not use the explicit form of $q(x)$.
Therefore, even if we choose another function for $q(x)$,
we can establish the following theorem.
The proof is omitted here because it can be shown in exactly the same way
as Theorem~\ref{thm:new2}.

\begin{theorem}
\label{thm:new3}
Assume that $F\in\LC_{L,R,\alpha,\beta}(\domD_d)$.
Here, let $q(x)$ be a function that satisfies
$q(x)\geq 0$ for $x\geq 0$ and $q'(x)\geq 0$
for $x\geq 0$.
Let $\mu=\min\{\alpha,\beta\}$,
let $h$ be defined as
\[
h = \frac{d}{\mu q(dn/\mu)},
\]
and let $M$ and $N$ be defined as~\eqref{eq:improve-M-N-2}.
Then, it holds that
\[
 \sup_{x\in\mathbb{R}}
\left\lvert
F(x) - \sum_{k=-M}^N F(kh)\sinc\left(\frac{x - kh}{h}\right)
\right\rvert
\leq C \ee^{-\pi \mu q(dn/\mu)},
\]
where $C$ is a constant independent of $n$,
expressed as~\eqref{eq:constant-C-thm:new2}.
\end{theorem}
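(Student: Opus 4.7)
The plan is to mirror the proof of Theorem~\ref{thm:new2} essentially line by line, relying on the observation made in the remark at the end of Section~\ref{lem:new}: the proof of Lemma~\ref{lem:new} never invokes the explicit form $q(x)=x/\arsinh x$. A quick audit confirms this — the proof only uses the identities $\cosh(\arsinh y)=\sqrt{1+y^2}$ and $\sinh(\arsinh y)=y$ applied to $y=(\mu/\beta)q(dn/\mu)$ and $y=(\mu/\alpha)q(dn/\mu)$, together with the elementary bound $\alpha^2,\beta^2\geq \mu^2$. Consequently, Lemma~\ref{lem:new} transfers verbatim to any non-negative function $q$, provided $M$ and $N$ are chosen according to~\eqref{eq:improve-M-N-2}.

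First I would split the uniform error into the discretization error and the truncation error, applying Theorem~\ref{thm:discrete} to the former and Lemma~\ref{lem:new} (in the abstract version just discussed) to the latter. The point of the specific choice $h=d/(\mu q(dn/\mu))$ is that it forces the identity $\pi d/h=\pi\mu q(dn/\mu)$, so the two exponential factors $\ee^{-\pi d/h}$ and $\ee^{-\pi\mu q(dn/\mu)}$ coincide. The resulting combined bound is structurally identical to~\eqref{eq:bound-disc-trunc-2}, with $q$ now arbitrary.

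Next I would bound the two $n$-dependent prefactors by constants. The truncation prefactor contains $q(dn/\mu)/\sqrt{1+\{q(dn/\mu)\}^2}\leq 1$, exactly as in Section~\ref{sec:proof2}, so this term contributes at most $(2R/(\pi d))\ee^{-\pi\mu q(dn/\mu)}$. For the discretization prefactor, the hypothesis $q'(x)\geq 0$ for $x\geq 0$ gives $q(dn/\mu)\geq q(d/\mu)$ for $n\geq 1$, so $1/(1-\ee^{-2\pi\mu q(dn/\mu)})\leq 1/(1-\ee^{-2\pi\mu q(d/\mu)})$. Combining these estimates yields the asserted bound $C\ee^{-\pi\mu q(dn/\mu)}$ with $C$ of the form~\eqref{eq:constant-C-thm:new2}.

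There is essentially no real obstacle, since the hard work has already been done in the proofs of Lemma~\ref{lem:new} and Theorem~\ref{thm:new2}. The only point that genuinely requires care is verifying that Lemma~\ref{lem:new} is insensitive to the choice of $q$ — this is precisely the content of the remark in Section~\ref{sec:remark-q} — and recognising that the monotonicity role played by Proposition~\ref{prop:q-x} in Section~\ref{sec:proof2} is now taken over directly by the standing hypothesis $q'\geq 0$ assumed in the theorem statement.
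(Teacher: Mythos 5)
Your proposal is correct and follows exactly the route the paper intends: the paper omits the proof of Theorem~\ref{thm:new3} precisely because it proceeds verbatim as the proof of Theorem~\ref{thm:new2}, with the hypothesis $q'\geq 0$ replacing Proposition~\ref{prop:q-x} and with Lemma~\ref{lem:new} applying unchanged since its proof never uses the explicit form of $q$. Your audit of these two points is accurate, so nothing is missing.
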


According to this theorem,
if we define $q(x)$ as $q(x)=x$,
we can derive the following result instead of Theorem~\ref{thm:new2}.

\begin{corollary}
\label{cor:new3}
Assume that $F\in\LC_{L,R,\alpha,\beta}(\domD_d)$.
Let $\mu=\min\{\alpha,\beta\}$,
let $h$ be defined as
\begin{equation}
h = \frac{1}{n},
\label{eq:improve-h-DE-4}
\end{equation}
and let $M$ and $N$ be defined as
\begin{align}
\begin{aligned}
  M&=\left\lceil\frac{1}{h}\arsinh\left(\frac{dn}{\alpha}\right)\right\rceil,
 &N&= \left\lceil\frac{1}{h}\arsinh\left(\frac{dn}{\beta}\right)\right\rceil.
\end{aligned}
\label{eq:improve-M-N-4}
\end{align}
Then, it holds that
\[
 \sup_{x\in\mathbb{R}}
\left\lvert
F(x) - \sum_{k=-M}^N F(kh)\sinc\left(\frac{x - kh}{h}\right)
\right\rvert
\leq C \ee^{-\pi d n},
\]
where $C$ is a constant independent of $n$, expressed as
\[
C= \frac{2}{\pi d}
\left[
\frac{2L}{\pi\mu(1 - \ee^{-2\pi d})\cos^{\alpha+\beta}((\pi/2)\sin d)\cos d}
+ R
\right].
\]
\end{corollary}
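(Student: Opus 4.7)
The plan is simply to invoke Theorem~\ref{thm:new3} with the specific choice $q(x) = x$ and read off the resulting formulas. Because Corollary~\ref{cor:new3} is essentially a specialization of Theorem~\ref{thm:new3}, the proof is a bookkeeping exercise: verify hypotheses, perform the substitution, and collect the terms.

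First, I would check that $q(x) = x$ meets the assumptions of Theorem~\ref{thm:new3}, namely $q(x) \geq 0$ and $q'(x) \geq 0$ for $x \geq 0$. Both are immediate, since $q(x) = x$ and $q'(x) = 1$.

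Next, I would substitute $q(x) = x$ into the statement of Theorem~\ref{thm:new3}. Since $q(dn/\mu) = dn/\mu$, the mesh size becomes
\[
 h = \frac{d}{\mu\, q(dn/\mu)} = \frac{d}{\mu\cdot (dn/\mu)} = \frac{1}{n},
\]
which is~\eqref{eq:improve-h-DE-4}. For the truncation numbers in~\eqref{eq:improve-M-N-2}, the arguments of $\arsinh$ simplify as $(\mu/\alpha)\,q(dn/\mu) = dn/\alpha$ and $(\mu/\beta)\,q(dn/\mu) = dn/\beta$, yielding exactly~\eqref{eq:improve-M-N-4}. The exponential rate in the error bound reduces to $\ee^{-\pi\mu q(dn/\mu)} = \ee^{-\pi d n}$, producing the claimed convergence rate.

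Finally, for the constant $C$, I would observe that the only place $q$ appears in~\eqref{eq:constant-C-thm:new2} is via $q(d/\mu)$ inside the factor $1 - \ee^{-2\pi\mu q(d/\mu)}$. With $q(x) = x$ we have $2\pi\mu q(d/\mu) = 2\pi d$, so this factor becomes $1 - \ee^{-2\pi d}$, while the remaining terms in~\eqref{eq:constant-C-thm:new2} are independent of $q$ and $n$. This recovers the stated expression for $C$ and completes the argument. There is no genuine obstacle; the only care required is to track the identity $q(dn/\mu) = dn/\mu$ consistently through the mesh-size, truncation-number, and error-bound formulas inherited from Theorem~\ref{thm:new3}.
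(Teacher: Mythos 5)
Your proposal is correct and is exactly the paper's route: the corollary is obtained by specializing Theorem~\ref{thm:new3} to $q(x)=x$, checking the trivial monotonicity hypotheses, and tracking $q(dn/\mu)=dn/\mu$ through the formulas for $h$, $M$, $N$, the exponential rate, and the constant $C$.
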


The convergence rate given by this result is
$\OO(\ee^{-\pi d n})$, which appears to be faster than
that by Theorem~\ref{thm:new2}: $\OO(\ee^{-\pi d n/\arsinh(dn/\mu)})$.
However, we emphasize that these rates are given
with respect to $n$,
not with respect to the number of function evaluations ($M+N+1$).
According to~\eqref{eq:improve-h-DE-4}
and~\eqref{eq:improve-M-N-4},
we derive $M=N=\OO(n\log n)$,
which grows more rapidly than $n$.

In this way, by choosing a higher order function for $q(x)$,
we can obtain a higher convergence rate with respect to $n$.
However, in exchange for the higher convergence rate,
we obtain higher growth of $M+N+1$.
As a consequence, we cannot improve the convergence rate
with respect to $M+N+1$.
Although Corollary~\ref{cor:new3} is meaningful
as an error bound with simpler formulas of $h$, $M$, and $N$,
we should note that the convergence rate is given
not with respect to $M+N+1$ but with respect to $n$.

\backmatter

%
%
%

\bmhead{Acknowledgments}
This work was partially supported by the
JSPS Grant-in-Aid for Young Scientists (B)
Number JP17K14147.

%


\bibliography{sn-bibliography}


\end{document}